\documentclass{amsart}
\usepackage{amssymb}
\usepackage{mathrsfs}
\usepackage{hyperref}
\usepackage{verbatim}
\usepackage{graphicx}
\usepackage{subfigure}
\usepackage{psfrag}
\usepackage{enumitem}

\newtheorem{theorem}{Theorem}[section]
\newtheorem{lemma}[theorem]{Lemma}
\newtheorem{corollary}[theorem]{Corollary}

{\theoremstyle{definition}

}

\begin{document}
\title[On mappings with Jacobian 1]{On mappings with Jacobian one}

\author[Z. Jelonek]{Zbigniew Jelonek}

\address[Zbigniew Jelonek]{ Instytut Matematyczny, Polska Akademia Nauk, \'Sniadeckich 8, 00-656 Warszawa, Poland      
\newline  
              E-mail: {\tt najelone@cyf-kr.edu.pl}
              }

\subjclass[2010]{Primary: 14R15; Secondary: 14P99.}

\keywords{Jacobian conjecture, real Jacobian conjecture, local polynomial diffeomorphism}

\date{\today}

\thanks{The author is grateful to Professor Lev Birbrair for the helpful discussions}

\maketitle

\begin{abstract}
We show that the set $A(n, d)$ of polynomial automorphisms $F : \Bbb C^n \to \Bbb C^n$ of degree at most  $d$ and with $Jac(F ) = 1$ is Zariski closed.
In particular every irreducible component of the set $A(n,d)$ of polynomial mappings with Jacobian $1$ is either composed with polynomial automorphisms or 
(generically) with counterexamples to the Jacobian Conjecture. Moreover every such component has dimension at least $n^2-1.$ In particular
if the set $X(n,d)$ is irreducible, and $n\ge 3, d\ge 6$, then a generic element of this set is a counterexample to the Jacobian Conjecture.

\end{abstract}

\maketitle

\section{Introduction}
Nowdays there is a lot of interests in a counterexample to the Jacobian Conjecture. Here we try to explain 
 how possible counterexamples are located in the variety $X(n,d)$ of all polynomial mappings of degree bounded by $d$ and with Jacobian $1.$ It was suggested already by Dru\.zkowski and Rusek
 \cite{dr}, \cite{rus}, that the variety $X(n,d)$ of such polynomials mappings is not irreducible (in fact they have considered the special case of the variety of maps in the Dru\.zkowski
 form). Here we prove that every component of this set is either consists with polynomial automorphisms, either generically it is composed wiith  counterexamples 
to the Jacobian Conjecture. It does not mean that $X(n,d)$ is reducible, but if not, then a generic element of this set is a counterexample to the Jacobian Conjecture,
at least if $n\ge 3, d\ge 6.$

\section{Main Result}

\begin{lemma}
The set $A(n, d)$ of polynomial automorphisms $F : \Bbb C^n \to \Bbb C^n$ of degree at most  $d$ and with $Jac(F ) = 1$ is Zariski closed.
\end{lemma}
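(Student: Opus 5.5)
The plan is to use the classical degree bound for inverses: if $F:\Bbb C^n\to\Bbb C^n$ is a polynomial automorphism with $\deg F\le d$, then $\deg F^{-1}\le d^{n-1}$ (Bass--Connell--Wright, or Gabber's bound). This turns the condition ``$F$ is invertible'' into a finite-dimensional algebraic condition. Concretely, let $V_d$ be the affine space of polynomial maps $\Bbb C^n\to\Bbb C^n$ of degree at most $d$, and let $N=d^{n-1}$. Consider the incidence set
$$\Gamma=\{(F,G)\in V_d\times V_N:\ G\circ F=\mathrm{id},\ Jac(F)=1\}.$$
The condition $G\circ F=\mathrm{id}$ says that each coefficient of $G\circ F-\mathrm{id}$ vanishes, and these coefficients are polynomials in the coefficients of $F$ and $G$. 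Likewise $Jac(F)=1$ is a finite set of polynomial equations in the coefficients of $F$. Hence $\Gamma$ is Zariski closed. By the degree bound, $A(n,d)$ is exactly the projection of $\Gamma$ to $V_d$. Note that a left inverse suffices: if $G\circ F=\mathrm{id}$ then $F$ is injective, and an injective polynomial endomorphism of $\Bbb C^n$ is an automorphism (Bia{\l}ynicki-Birula--Rosenlicht).

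The main obstacle is that projections of closed sets are only constructible in general, so closedness of $\pi(\Gamma)$ needs an argument. I would show that $\pi|_\Gamma$ is proper, or argue directly with sequences: since $A(n,d)$ is constructible, it suffices to show it is closed in the Euclidean topology. Take $F_k\in A(n,d)$ with $F_k\to F$, and let $G_k=F_k^{-1}$. The key step is to prove that the coefficients of $G_k$ stay bounded. Then a subsequence converges to some $G\in V_N$, and passing to the limit gives $G\circ F=\mathrm{id}$ and $Jac(F)=1$, so $F\in A(n,d)$.

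For the boundedness I would use the explicit inverse formula, the formal inversion of Bass--Connell--Wright (Abhyankar--Gurjar). Since $Jac(F_k)=1$, the coefficients of $F_k^{-1}$ are given by universal polynomials in the coefficients of $F_k$ and in the entries of $(dF_k(0))^{-1}$. The latter is the inverse of a matrix with determinant $1$, so its entries are polynomial in the coefficients of $F_k$ as well. Alternatively one can write the homogeneous components of $G$ via the recursion coming from $G\circ F=\mathrm{id}$ degree by degree: each component is a polynomial in the coefficients of $F$ and $dF(0)^{-1}$, uniformly in $k$. Either way the coefficients of $G_k$ are polynomial functions of the coefficients of $F_k$, hence bounded along a convergent sequence.

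In fact this gives a cleaner purely algebraic proof. Define $\Phi:X(n,d)\to V_N$ by taking the truncation at degree $N$ of the formal inverse; $\Phi$ is a regular map. Then $A(n,d)=\{F\in X(n,d): \Phi(F)\circ F=\mathrm{id}\}$, by the degree bound and uniqueness of the formal inverse. This set is Zariski closed, being the zero set of polynomial equations in the coefficients of $F$. I would present this version, with the verification that truncation commutes correctly (the true inverse has degree at most $N$ and equals the formal inverse) being the only point needing care.
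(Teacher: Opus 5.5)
Your proof is correct, but it takes a different route from the paper's.

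\textbf{The paper's route.} It proceeds in two steps. First it gets constructibility of $A(n,d)$ from Chevalley's theorem, applied to the incidence set $\{(F,G): F\circ G=\mathrm{id}\}$ with the same bound $d^{n-1}$. Then it proves closedness in the Euclidean topology by a compactness argument. For $f_k\to F$ with $f_k(0)=0$, it divides each inverse component $Q_{k,j}$ by its largest coefficient and passes to a subsequence with $\tilde Q_{k,1}\to Q_1$ and $1/\|Q_{k,1}\|\to a$. This gives $Q_1(F)=a x_1$. It rules out $a=0$ because the components of a Jacobian-one map are algebraically independent, so $x_j\in\Bbb C[F]$ for every $j$. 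The paper never needs explicit control of the inverse coefficients; boundedness comes out of the compactness argument indirectly.

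\textbf{Your route.} Your main version replaces both steps by one observation. On $X(n,d)$ you have $\det dF(0)=1$, so $dF(0)^{-1}$ is the adjugate, and the degree-$\le N$ truncation of the formal inverse is a morphism $\Phi$. Hence $A(n,d)$ is cut out by the explicit polynomial equations $\Phi(F)\circ F=\mathrm{id}$. This gains several things:
\begin{itemize}
\item it avoids Chevalley and the Euclidean topology altogether;
\item it produces explicit defining equations;
\item it shows in addition that $F\mapsto F^{-1}$ is a regular map on $A(n,d)$.
\end{itemize}
The cost is the formal-inversion bookkeeping, whereas the paper only uses algebraic independence and compactness. Both arguments rest on the same bound $\deg F^{-1}\le d^{n-1}$.

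\textbf{Details to state explicitly.}
\begin{itemize}
\item The formal inverse is taken at the origin, so for $F(0)=b\neq 0$ you should apply the construction to $F-b$ and set $\Phi(F)(y)=G_0^{\le N}(y-b)$, where $G_0^{\le N}$ is the truncated formal inverse of $F-b$. This keeps $\Phi$ regular, and the degree bound applies to $F-b$ since it has the same degree as $F$. The paper makes the same normalization $f_k(0)=0$.
\item For the converse inclusion you do not need Bia{\l}ynicki-Birula--Rosenlicht. The identity $\Phi(F)\circ F=\mathrm{id}$ makes $F^*$ surjective on $\Bbb C[x_1,\dots,x_n]$. It is injective because $Jac(F)\neq 0$ (or because a surjective endomorphism of a Noetherian ring is injective). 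So $F^*$ is an automorphism.
\end{itemize}
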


\begin{proof}
It is easy to see that the set $A(n, d)$ is a constructible subset of $\Omega(n,d)\cong\Bbb C^M$ (where  $\Omega(n,d)$ is a space of all  mappings $F:\Bbb C^n \to \Bbb C^n$ of degree at most $d$).  Indeed, consider the algebraic set 
$$S=\{ (F,G)\in \Omega(n,d)\times \Omega(n, d^{n-1}) ; F\circ G=identity\}.$$ Then the projection $\pi(S)$ of
$S$ on  $\Omega(n,d)$ is a constructible set by Chevaley theorem. But $A(n,d)=\pi(S) \cap \{ F : Jac(F)=1\},$ hence it is a constructible set.

Hence it is enough to prove that the set $A(n, d)$ is closed in the euclidian topology. Let $f_k \in A(n, d)$ and $f_k \to F.$ 
Hence $F = (F_1, ..., F_n)$ is a polynomial mapping with $Jac(F ) = 1.$ We can assume that $f_n (0) = 0.$ Indeed, $f_k (0) = b_k$ and $b_k \to b = F (0).$

 Now if we show that $f_k - b_k \to G \in A(n, d)$,  then also $f_k \to G + b.$ Now let $f_k(1, ..., 1) = (w_{k,1} , ..., w_{k,n}) = w_k \in \Bbb C^n.$
  Hence $w_k \to w = F (1, ..., 1).$  If $g_k = f^{ -1}_k,$ then $g_k(w_k) = (1, ..., 1).$  Note that the family of all $w_k$ is bounded.
  Now let $g_k = (Q_{k,1} , ..., Q_{k,n}).$ It is well known that the algebraic degree of $g_k$ is uniformly bounded by $d^{n-1}.$
  
   For a polynomial $Q = \sum a_\alpha x^\alpha\in  \Bbb C[x_1, ..., x_n]$ let $||Q|| =max |a_\alpha|.$ Let us denote $\tilde{Q} = Q/||Q||.$
    Since $Q_{k,j} (w_k) \to 1$ and $w_k$ is bounded, we have that there is an $\epsilon > 0$, such that $||Q_{k,j} || > \epsilon$ for every $k, j.$
    
    Now we have $Q_{k,1} (f_{k,1} , ..., f_{k,n} ) = x_1$. Consequently $\tilde{Q}_{k,1} (f_{k,1} , ..., f_{k,n} ) = a(k) x_1$, where $0 <a(k) < 1/\epsilon$. 
    If we take a subsequence of $\tilde{Q}_{k,1}$ we can assume that it converges to a polynomial $Q_1$ and one of its (fixed) coefficients has norm $1$. 
    Moreover we can assume that $a(k)$ also converges to $a$. Let us note that $a  \not= 0,$ because otherwise $Q_1(F_1, ..., F_n) = 0$ and $Q_1$ is a non-constant polynomial. 
    
    This contradicts the fact that $F_1, ..., F_n$ are algebraically independent, as components of a mapping with Jacobian equal to $1.$
     Hence finally $(1/a)Q_1(F_1, ..., F_n) = x_1,$ i.e., there is a polynomial $Q'_1$ such that $Q'_1(F_1, ..., F_n) = x_1.$
      In the same way there are polynomials $Q'_2, ..., Q'_n$ such that $Q'_j (F_1, ..., F_n) = x_j$ for $j = 2, ...n.$ This means that F is an automorphism.
\end{proof}

\begin{theorem} Let $X(n,d)$ denotes the space (which is an affine algebraic variety) of polynomial mappings $F$ with $Jac(F)=1$ and of degree bounded by $d.$ Let $Y$ be an irreducible component of this set. Then either $Y$ consists of polynomial automorphisms, or a generic element of $Y$ is a counterexample to Jacobian Conjecture. Moreover, $\dim Y \ge n^2-1.$
\end{theorem}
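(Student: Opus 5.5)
The plan has two parts: the dichotomy, which should follow at once from the Lemma, and the dimension bound, which comes from a group action.

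\textbf{The dichotomy.} The set $Y$ is an irreducible closed subvariety of $X(n,d)$. By the Lemma, $A(n,d)\cap Y$ is Zariski closed in $Y$. There are two cases.
\begin{itemize}
\item If $A(n,d)\cap Y=Y$, then every element of $Y$ is a polynomial automorphism.
\item Otherwise $A(n,d)\cap Y$ is a proper closed subset of the irreducible variety $Y$. Its complement $Y\setminus A(n,d)$ is then a nonempty Zariski open, hence dense, subset of $Y$. Each of its elements is a polynomial map with Jacobian $1$ that is not an automorphism, i.e. a counterexample to the Jacobian Conjecture. So a generic element of $Y$ is a counterexample.
\end{itemize}

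\textbf{The dimension bound.} I would use a group action on $X(n,d)$. Take the group $G=SL_n(\Bbb C)\ltimes\Bbb C^n$ of affine maps $x\mapsto Ax+b$ with $\det A=1$. It acts by $F\mapsto F\circ\phi$, for $\phi\in G$. This action preserves the degree bound $d$, since composing with an affine map does not raise degree. It also preserves the Jacobian: by the chain rule $Jac(F\circ\phi)=Jac(F)(\phi(x))\cdot\det A=1$. Hence $G$ acts algebraically on $X(n,d)$. Since $G$ is connected, it maps each irreducible component of $X(n,d)$ to itself: it permutes the finitely many components, and a connected group acts trivially on a finite set. So $Y$ is $G$-stable, and for $F\in Y$ we get $\dim Y\ge\dim(G\cdot F)=\dim G-\dim \mathrm{Stab}(F)$.

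\textbf{The stabilizer.} Note $\dim G=n^2-1+n$. The stabilizer is $\{\phi: F\circ\phi=F\}$. A map with $Jac(F)=1$ is étale, hence quasi-finite, so its fibers are finite. Suppose $F\circ\phi=F$. Then for every $x$ the orbit $\{\phi^m(x)\}$ lies in the finite fiber $F^{-1}(F(x))$. Hence for each $x$ some power $\phi^m$ fixes $x$. The sets $\{x:\phi^m(x)=x\}$, one for each $m$, are countably many closed algebraic sets covering $\Bbb C^n$, so one of them is all of $\Bbb C^n$. Thus $\phi$ has finite order, and the stabilizer is an algebraic group all of whose elements are torsion. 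Its identity component is then trivial, so the stabilizer is finite and $\dim(G\cdot F)=n^2+n-1$.

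This actually gives $\dim Y\ge n^2+n-1\ge n^2-1$. Alternatively, one can use just the $SL_n$-part, or left composition $F\mapsto\psi\circ F$ with $\psi\in G$, which also works with the same finiteness argument.

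The only real obstacle is checking that the stabilizer is finite. The fiber-finiteness argument above handles it, and for the weaker bound $n^2-1$ it suffices to treat the $SL_n$-part alone.
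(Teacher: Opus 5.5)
Your proof is correct and follows the paper's route. The dichotomy comes from the Zariski closedness of $A(n,d)$, and the dimension bound comes from the orbit of $F$ under right composition with volume-preserving linear substitutions; the paper places this orbit inside $Y$ by choosing $F$ on no other component, while you use that a connected group preserves each component. Your version goes further in two ways: adding translations gives the sharper bound $\dim Y\ge n^2+n-1$, and your finite-fibre/torsion argument proves that the stabilizer is finite, which the paper uses without justification when it asserts $\dim F\circ\mathcal{L}=n^2-1$.
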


\begin{proof} Let $Y$ be an irreducible component. Then either $Y\subset A(n,d)$ or it contains a counterexample to the Jacobian Conjecture. In the latter case since $A(n,d)$ is closed it means that $A(n,d)\cap Y$ is a nowhere dense in $Y.$ Now denote by $\mathcal L$ the set of linear automorphisms with the Jacobian $1.$ Take $F\in Y$, which is not contaned in any different component of $X(n,d).$
Then $F\circ \mathcal{L}$ is an irreducible set which is contained in $X(n,d).$ Since it has a common point which belongs to $Y$ and does not belong to any other component it means that $F\circ \mathcal{L}\subset Y.$ But $\dim F\circ \mathcal{L}=n^2-1.$
\end{proof}

\begin{corollary}
If the set $X(n,d)$ is irreducible, and $n\ge 3, d\ge 6$, then a generic element of this set is a counterexample to the Jacobian Conjecture. 
\end{corollary}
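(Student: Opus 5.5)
The plan is to combine the Theorem with a reducibility statement for $A(n,d)$. Assume $X(n,d)$ is irreducible, so that its only irreducible component is $Y=X(n,d)$ itself. The Theorem gives two possibilities. Either $X(n,d)$ consists entirely of polynomial automorphisms, or a generic element of $X(n,d)$ is a counterexample to the Jacobian Conjecture. In the second case we are done. The whole proof therefore reduces to excluding the first case, namely $X(n,d)=A(n,d)$, when $n\ge 3$ and $d\ge 6$. Since irreducibility of $X(n,d)$ is the hypothesis, it suffices to prove:
\[
A(n,d)\ \text{is reducible for } n\ge 3,\ d\ge 6 .
\]
This is exactly where the restrictions on $n$ and $d$ enter. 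Note that $A(n,d)$ is an algebraic set by the Lemma, so reducibility makes sense.

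For this reducibility I would first invoke the known results on the structure of the ind-variety of polynomial automorphisms of bounded degree (Furter, Furter--Kraft). These show that for $n\ge 3$ and $d$ large enough the variety of automorphisms of degree $\le d$ with Jacobian $1$ has several irreducible components. If those results are stated only for $\mathrm{Aut}$ without the Jacobian normalization, I will pass to Jacobian $1$ by multiplying the first coordinate by a constant. This rescaling is an isomorphism of the relevant varieties compatible with the decomposition into components.

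If a self-contained argument is wanted, I would build two irreducible families $Z_1,Z_2\subset A(n,d)$ and show that neither lies in the closure of the other.

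\emph{First family.} Let $Z_1$ be the closure of $L_1\circ T\circ L_2$, where $L_1,L_2$ are affine maps with Jacobian $1$ and $T$ is an elementary map of degree $d$ of the form $(x_1+p(x_2,\dots ,x_n),x_2,\dots,x_n)$.

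\emph{Second family.} Let $Z_2$ be the closure of compositions $L_1\circ T'\circ L_2\circ T''\circ L_3$, with $T'$ elementary of degree $3$ and $T''$ elementary of degree $2$. This requires $d\ge 6$ and uses a third variable, so $n\ge 3$, to keep the composite honestly of degree $6$ and not collapsible.

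\emph{Separating the families.} I would use an invariant that is constant on $Z_1$ and jumps on $Z_2$. One candidate is the fibre dimension, or the structure of the top homogeneous part: for maps in $Z_1$ the top-degree part has rank $\le 1$, a closed condition, while a generic element of $Z_2$ violates it. Conversely, I would show $Z_1\not\subset Z_2$ by a dimension count. For $d\ge 6$ the space of degree-$d$ polynomials in $n-1$ variables yields $\dim Z_1>\dim Z_2$.

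The main obstacle is proving that these families really lie in different components, not merely that neither is contained in the other. Two closed irreducible subsets that are not nested can still sit inside a common larger component. To handle this I would compute, at a generic point of $Z_1$, the dimension of the Zariski tangent space to $A(n,d)$, or at least bound the local dimension of $A(n,d)$ there. If this equals $\dim Z_1$, then $Z_1$ is a component. Since $Z_2\not\subset Z_1$, the set $A(n,d)$ then has a second component, so $A(n,d)\neq X(n,d)$ under the irreducibility hypothesis, and the Theorem yields that a generic element of $X(n,d)$ is a counterexample. If this tangent-space computation proves too delicate, I would fall back on citing the Furter--Kraft reducibility result directly.
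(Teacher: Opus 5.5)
Your reduction is correct and matches the paper's. Under irreducibility, the Theorem applied to $Y=X(n,d)$ leaves only the case $X(n,d)=A(n,d)$ to exclude, and reducibility of $A(n,d)$ excludes it. The paper's proof is exactly this, plus an external citation that supplies the needed fact for precisely $n\ge 3$, $d\ge 6$.

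The gap is that your proposal never secures that fact. The result you quote holds only for ``$d$ large enough'', which does not give the corollary for every $d\ge 6$. The component results you have in mind (Furter's analysis via multidegree and semicontinuity of length) concern the plane. They do not pass to $n\ge 3$ by adding identity coordinates, because higher dimension allows many more degenerations. Nor is $n\ge 3$ what makes your second family non-collapsible: in two variables, a cubic and a quadratic elementary map separated by a generic affine map already compose to an automorphism of degree $6$ and length $2$.

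The self-contained fallback breaks at two concrete points.

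\emph{The separating invariant does not separate.} Every component of $L_2\circ T''\circ L_3$ has quadratic part proportional to one quadratic form $Q$. Hence the degree-$6$ part of $L_1\circ T'\circ L_2\circ T''\circ L_3$ is $c\,Q^3\,L_1^{\mathrm{lin}}e_1$, where $L_1^{\mathrm{lin}}$ is the linear part of $L_1$, and this again has rank $\le 1$. For $d>6$ the degree-$d$ part of every element of $Z_2$ is simply $0$. A closed condition that does hold on $Z_1$ and fail generically on $Z_2$ is that the nonlinear parts of all $n$ components span a space of dimension $\le 1$.

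\emph{For $n\ge 3$, $Z_1$ is not an irreducible component of $A(n,d)$.} This is the more serious failure. Consider the closure of $\{L_1\circ J\circ L_2\}$ with $J=(x_1+p_1(x_2,\dots,x_n),\,x_2+p_2(x_3,\dots,x_n),\,x_3,\dots,x_n)$ of degree $\le d$. It is an irreducible subset of $A(n,d)$ containing $Z_1$ (take $p_2=0$). Its generic member has nonlinear parts spanning a $2$-dimensional space, so it strictly contains $Z_1$. Consequently:
\begin{itemize}
\item the tangent-space computation at a generic point of $Z_1$ cannot return $\dim Z_1$;
\item the corrected rank invariant does not separate $Z_2$ from this larger family either, since both have rank $2$ generically.
\end{itemize}

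To make this route work you would have to exhibit a genuine component and prove that $Z_2$ is not in it, which means controlling degenerations of automorphisms in dimension $n\ge 3$. That is exactly the nontrivial content the paper outsources to its cited reference. Your proposal neither proves it nor cites it in the required range.
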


\begin{proof}
Indeed, due to \cite{cjc} we know that $X(n,d)$ is not composed only with polynomial automorphisms.
\end{proof}

\end{document}